\newtheorem{Thm}[subsection]{Theorem}
\newtheorem{Lem}[subsection]{Lemma}
\newtheorem{Prop}[subsection]{Proposition}
\newtheorem{Cor}[subsection]{Corollary}
\newtheorem{Exm}[subsection]{Example}
\newtheorem{Rem}[subsection]{Remark}
\newcommand{\bcp}{\mathbb C \mathbb P}
\newcommand{\wt}{\widetilde}
\begin{document}

\title[Almost Complex structure on $S^{2m} \times M$]{Nonexistence of  Almost Complex Structures on the product $S^{2m} \times M$}

\author{Prateep Chakraborty}
\address{Stat-Math Unit, Indian Statistical Institute, 8th Mile, Mysore Road, RVCE Post, Bangalore 560059, INDIA.}
\email{chakraborty.prateep@gmail.com}

%

\author{ Ajay Singh Thakur}
\address{Stat-Math Unit, Indian Statistical Institute, 8th Mile, Mysore Road, RVCE Post, Bangalore 560059, INDIA.}

\email{thakur@isibang.ac.in}

\thanks{The research of second author is supported by DST-Inspire Faculty Scheme(IFA-13-MA-26)}

\keywords{Almost complex structure, Characteristic classes, Dold manifold}
\subjclass[2010]{57R15 (57R20)}

\begin{abstract}
In this note we give a necessary  condition for having an almost complex structure on the product $S^{2m} \times M$, where $M$ is a connected orientable closed manifold. We show that if the Euler characteristic $\chi(M) \neq 0$, then except for finitely many values of $m$, we do not have almost complex structure on $S^{2m} \times M$. In the particular case when $M = \bcp^n, n \neq 1$, we show  that if $n \not \equiv 3 \pmod 4$ then $S^{2m} \times \bcp^{n}$ has an almost complex structure if and only if $m = 1,3$. As an application we obtain  conditions on the nonexistence of  almost complex structure on Dold manifolds. 
\end{abstract}
\maketitle

\section{Introduction}
Recall that an oriented manifold $X$ has an almost complex structure (a.c.s. for short) if there is a complex vector bundle $\xi$ such that the underlying real bundle $\xi_{\mathbb R}$ of $\xi$ is isomorphic to  the tangent bundle $\tau_X$ of $X$ as oriented bundles.

It is well known \cite{borel} that the only even dimensional spheres admitting an a.c.s are $S^{2}$ and $S^6$. The product of even dimensional spheres, $S^{2m} \times S^{2n}$ with $m,n \neq 0$, has an a.c.s. if and only if  $(m,n) = (1,1),(1,3),(3,1),(1,2),(2,1) \mbox{ or } (3,3)$ (see, for example, \cite{datta} and \cite{sutherland}).   In \cite{tang}, Tang showed that $S^{2m} \times \bcp^2$ has an  a.c.s. if and only if $m = 1,3$, and $S^{2m} \times \bcp^3$ has an a.c.s. if and only if $m =  1,2,3$.

 In this note we deal with the question of existence of an a.c.s. on the general product of the form $S^{2m} \times M$, where $M$ is a connected orientable closed manifold of even dimension and $m \neq 0$. We have the following main result.
\begin{Thm}\label{4m}
	If $S^{2m} \times M$ has an a.c.s. then $2^r\cdot(m-1)!$ divides the Euler characteristic $\chi(S^{2m} \times M)$, where $2^r$ is the highest power of $2$ dividing $m$. 
\end{Thm}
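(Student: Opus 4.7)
The plan is to exploit Bott's integrality theorem on the sphere factor together with the Newton identities and the K\"unneth decomposition, both in cohomology and in $K$-theory, to force divisibility of $\chi(X)=\langle c_n(\xi),[X]\rangle$. Let $\xi$ be a complex bundle of rank $n=m+\ell$ on $X=S^{2m}\times M$ (with $\dim_{\mathbb R}M=2\ell$) such that $\xi_{\mathbb R}\cong\tau_X$. Write $H^*(X;{\mathbb Z})=H^*(M)\oplus\alpha\cdot H^*(M)$, $\alpha^2=0$, with $\alpha\in H^{2m}(X;{\mathbb Z})$ the pullback of a generator of $H^{2m}(S^{2m})$, and decompose $c_k(\xi)=x_k+\alpha\,y_k$, with $x_k\in H^{2k}(M)$, $y_k\in H^{2(k-m)}(M)$; then $y_k=0$ for $k<m$, and $x_n=0$ since $H^{2n}(M)=0$, so $c_n(\xi)=\alpha\,y_n$.

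First I would apply Bott's theorem to $i^*\xi$, where $i\colon S^{2m}\hookrightarrow X$ is $x\mapsto(x,p)$: since $i^*\xi$ is a complex bundle of rank $n\geq m$ on $S^{2m}$, Bott gives $c_m(i^*\xi)\in(m-1)!\cdot{\mathbb Z}\cdot\alpha$, and comparing with $i^*c_m(\xi)=y_m\alpha$ yields $(m-1)!\mid y_m$. Next, since $\tau_X=\pi_1^*\tau_{S^{2m}}\oplus\pi_2^*\tau_M$ with $\tau_{S^{2m}}$ stably trivial, $e(\tau_X)=2\alpha\cdot\pi_2^*e(\tau_M)$ and $p(\tau_X)=\pi_2^*p(\tau_M)$, whence $\chi(X)=\langle y_n,[M]\rangle=2\chi(M)$; moreover, from $c(\xi)c(\bar\xi)=\pi_2^*p(\tau_X)$, taking $\alpha$-components gives the recursion $\sum_{j=0}^{k-m}(-1)^j x_j y_{k-j}=0$ for all even $k\geq m$, which expresses the even-indexed $y_k$'s in terms of the odd-indexed ones.

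To propagate the divisibility from $y_m$ to $y_n$, I would pass to $K$-theory. The K\"unneth splitting $K^0(X)\cong K^0(M)\oplus\beta\cdot K^0(M)$ (with $\beta\in\tilde K^0(S^{2m})$ the Bott generator) gives $[\xi]=[\xi_0]+\beta[\eta]$, and since $\operatorname{ch}(\beta)=\alpha$ one obtains $\operatorname{ch}_k(\xi)=\operatorname{ch}_k(\xi_0)+\alpha\,\operatorname{ch}_{k-m}(\eta)$. The fact that $\tau_X\otimes{\mathbb C}=\xi\oplus\bar\xi$ is pulled back from $M$ forces $\operatorname{ch}_j(\eta)=0$ for $j\equiv m\pmod 2$. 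Substituting into the Newton identity $n\,c_n=(-1)^{n-1}\sum_{j=0}^{n-1}(-1)^j c_j\,s_{n-j}$ (with $s_k=k!\operatorname{ch}_k$), extracting the $\alpha$-component via Leibniz, and invoking the Bott divisibility together with the parity vanishing of $\operatorname{ch}_j(\eta)$, one finds $n\,y_n$ as an integer linear combination of products of $x_i$'s and $\operatorname{ch}_{j}(\eta)$'s whose coefficients are factorial ratios; a careful $2$-adic valuation count — using $v_2(m!)=r+v_2((m-1)!)$ together with the factor of $2$ in $e(\tau_{S^{2m}})$ — then yields $2^r(m-1)!\mid\chi(X)$.

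The main obstacle is the $2$-adic bookkeeping in the last step: one must verify uniformly in $M$ that the Newton expansion, once simplified using the parity constraints on $\eta$ and the Pontryagin recursion, never loses the factor $(m-1)!$ inherited from Bott and always gains the extra $2^r$ coming from the sphere's Euler characteristic.
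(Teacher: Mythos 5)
Your outline has the right skeleton (K\"unneth splitting, Bott integrality, conjugation symmetry killing half the components), but the decisive step is missing, and you say so yourself: the ``careful $2$-adic valuation count'' that is supposed to produce the extra factor $2^r$ is never performed, and the two sources you name for it ($v_2(m!)=r+v_2((m-1)!)$ and the factor of $2$ in $e(\tau_{S^{2m}})$) do not by themselves yield it --- the factor $2$ from $\chi(S^{2m})=2$ gives only one power of $2$, and nothing in your Newton-identity expansion is shown to supply $v_2(m!)$ rather than $v_2((m-1)!)$. A secondary gap: restricting $\xi$ to $S^{2m}\times\{p\}$ only controls the $H^0(M)$-component $y_m$; to conclude $(m-1)!\mid\chi$ you need $(m-1)!$ to divide \emph{every} $\alpha$-component $y_{m+i}$, $i\ge 1$. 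This is the paper's Proposition \ref{suspension}, obtained by writing the smash-product component as $g^m\otimes(\beta-n)$ and computing $c(g^m\otimes(\beta-n))=1+(m-1)!\,y_m\sum_i(-1)^i\binom{m+i-1}{i}(\sum_k s_k^i)$; your plan to recover this by propagating $y_m$ through Newton's identities is exactly the part that needs the factorial bookkeeping you have not done.

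For comparison, the paper's mechanism for the extra $2^r$ when $m=2p$ is a suspension trick you do not have: since $\rho(a_1)=0$ forces $a_1+\overline{a_1}=0$ and hence kills the even Chern classes of $a_1$, only the odd components $y_{2p}x_i$ ($i$ odd) contribute to the top Chern class; multiplying by the Bott generator gives $g\otimes a_1\in\wt{K}(S^{4p+2}\wedge M)$ with $c(g\otimes a_1)=1+y_{2p+1}\sum_i(2p+i)x_i$, so Proposition \ref{suspension} on the \emph{larger} sphere shows $(2p)!$ divides $(2p+i)x_i$; and since $2p+i$ is odd for odd $i$, the entire $2$-part of $(2p)!$ --- which exceeds that of $(2p-1)!$ by exactly $2^r$ --- is forced onto $\chi(S^{4p}\times M)$. (An alternative that would rescue your more computational route is to verify directly, e.g.\ by Kummer's theorem, that $v_2\binom{m+i-1}{i}\ge r$ whenever $i$ is odd and $2^r\| m$, and feed this into the explicit total Chern class above; but as written your proposal neither does this nor the suspension argument, so the theorem is not yet proved.)
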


 In the special case when $M = \bcp^n$, we  have the following result.
\begin{Thm}\label{finalthm}
	Let $n>1$ and $n \not \equiv 3 \pmod 4$. Then $S^{2m} \times \bcp^{n}$ has a.c.s. if and only if $m = 1,3$.		
\end{Thm}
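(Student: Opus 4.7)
For $m\in\{1,3\}$, $S^{2m}$ itself carries an almost complex structure ($S^2=\bcp^1$, and $S^6$ via the $G_2$-invariant structure), so the product $S^{2m}\times\bcp^n$ inherits one. For the converse, I would apply Theorem~\ref{4m}: since $\chi(S^{2m}\times\bcp^n)=2(n+1)$ and the hypothesis $n\not\equiv 3\pmod 4$ forces $v_2(n+1)\le 1$, we have $v_2(\chi)\le 2$. Theorem~\ref{4m} then gives $v_2(m!)=v_2(2^r(m-1)!)\le 2$; by Legendre's formula $v_2(m!)=m-s_2(m)$, where $s_2(m)$ is the binary digit sum, we conclude $m\in\{1,2,3\}$. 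The bulk of the argument is to exclude $m=2$.

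\textbf{Setting up $m=2$.} Suppose $\xi$ is a complex rank-$(n{+}2)$ bundle whose underlying oriented real bundle is $\tau_{S^4\times\bcp^n}$. Let $\alpha\in H^4$ and $\beta\in H^2$ be the pulled-back generators, so that $H^*(S^4\times\bcp^n;\bz)=\bz[\alpha,\beta]/(\alpha^2,\beta^{n+1})$ and $c_i(\xi)=a_i\beta^i+b_i\alpha\beta^{i-2}$ (with $b_i=0$ for $i<2$). The Pontryagin identity $c(\xi)c(\bar\xi)=(1-\beta^2)^{n+1}$, derived from $p(\tau)=(1+\beta^2)^{n+1}$, separates into a pure-$\beta$ relation $f(t)f(-t)\equiv(1-t^2)^{n+1}\pmod{t^{n+1}}$ for $f(t)=\sum a_it^i$, and an $\alpha$-recursion $\sum_{i+j=2k}(-1)^i a_ib_j=0$ (for $2\le 2k\le n+2$), which forces $b_2=0$ and expresses each $b_{2k}$ in terms of the $a_i$'s and odd-indexed $b_j$'s. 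The Euler condition $c_{n+2}=2(n+1)\alpha\beta^n$ gives $b_{n+2}=2(n+1)$, whose $2$-adic valuation is at most $2$ by hypothesis.

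\textbf{Exclusion of $m=2$ via Chern-character integrality.} These algebraic constraints alone still admit integer solutions, so the final obstruction must use that $[\xi]$ is a genuine class in $K^0(S^4\times\bcp^n)$. With $u\in\widetilde K^0(S^4)$ and $v=L-1\in\widetilde K^0(\bcp^n)$ (where $L$ is the hyperplane line bundle), the group $K^0(S^4\times\bcp^n)$ has basis $\{u^iv^j:0\le i\le 1,\,0\le j\le n\}$, and the Chern-character image is the $\bz$-lattice generated by $\text{ch}(u^iv^j)=\alpha^i(e^\beta-1)^j$. Matching $\text{ch}(\xi)=\sum n_{ij}\,\text{ch}(u^iv^j)$ degree by degree determines each $n_{ij}$ from the $(a_i,b_j)$ and requires $n_{ij}\in\bz$. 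Already the $\alpha\beta^2$-coefficient of $\text{ch}_4$ yields $n_{12}=-b_3/4$, forcing $4\mid b_3$; matching the higher components of $\text{ch}_5,\ldots,\text{ch}_{n+2}$ imposes further $2$-adic divisibilities on the odd-indexed $b_j$'s. Combined with the $\alpha$-recursion and the Euler value $b_{n+2}=2(n+1)$, these divisibilities produce an equation whose two sides have incompatible $2$-adic valuations when $v_2(2(n+1))\le 2$, yielding the desired contradiction. The hard part will be organizing this cascading calculation uniformly in $n$; the identity $s_{2k}(\xi)=(n+1)\beta^{2k}$, which follows from $\text{ch}(\xi)+\text{ch}(\bar\xi)=2+(n+1)(e^\beta+e^{-\beta})$, is the key simplification that keeps the polynomial expressions for $\text{ch}_k$ tractable.
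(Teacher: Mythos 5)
Your sufficiency argument and the reduction to $m\in\{1,2,3\}$ are correct and match the paper's route (the paper packages the reduction as Corollary \ref{cor2}, which follows from Theorem \ref{4m} exactly as you compute). The problem is the exclusion of $m=2$, which is the entire content of the theorem beyond that reduction, and there your argument is a strategy outline rather than a proof: you assert that matching $\mathrm{ch}(\xi)$ against the lattice spanned by the $\mathrm{ch}(u^iv^j)$ will ``produce an equation whose two sides have incompatible $2$-adic valuations,'' but you never exhibit that equation, and you explicitly defer the ``cascading calculation'' that would produce it. Since the conclusion is false without the hypothesis $n\not\equiv 3\pmod 4$ (for instance $S^4\times\bcp^3$ does admit an a.c.s.; see Example \ref{example3}), any correct argument must isolate exactly where the congruence class of $n$ enters. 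Your sketch never does this, so there is no way to check that the claimed contradiction actually materializes for $n\equiv 1\pmod 4$ while necessarily failing for $n\equiv 3\pmod 4$.

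A second, related concern is that you replace the hypothesis $\rho(a)=[\tau]$ in $\wt{KO}(S^4\times\bcp^n)$ by its rational shadow $c(\xi)c(\bar\xi)=(1-\beta^2)^{n+1}$ together with integrality of the Chern character. That is a strictly weaker set of constraints: $\wt{KO}$ of these spaces has $2$-torsion, and the paper's proof leans precisely on the $2$-primary information you discard. Concretely, the paper first uses Proposition \ref{2nd prop} (built on the fact that $4\cdot(m-1)!$ divides every Chern class of an element of $\ker\rho\subset\wt{K}(S^4\wedge\bcp^n)$ for $n>1$) to reduce to the single case $n\equiv 1\pmod 4$, and then combines the explicit free generators $w_k$ of $\ker\rho$ (Proposition \ref{chernclasses}) with Thomas's classification of the classes $a_3$ satisfying $\rho(a_3)=[\tau_{\bcp^n}]$ (Equation \ref{chern of a_3}) to compute the top Chern class of any candidate modulo $8$, obtaining $8\mid 2(n+1)$ and hence a contradiction. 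Whether your weaker constraints force the same mod-$8$ conclusion is exactly what would need to be proved, and it is not obviously true; as it stands the proposal has a genuine gap at its decisive step.
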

We also observe that $S^2 \times \bcp^1$ has exactly two almost complex structures whereas $S^2 \times \bcp^2$ and  $S^4 \times \bcp^3$ have infinitely many almost complex structures. 
 As an application we obtain a  result  on the nonexistence of a.c.s. on Dold manifolds which strengthens the result obtained  in \cite{tangdold}.

For any real or complex vector bundle $\xi$ over a $CW$-complex $X$,  we shall denote its stable class in $\wt{KO}(X)$ or $\wt{K}(X)$ by $[\xi]$. Let $Vect_n(X)$ be the set of all isomorphism classes of $n$-rank complex vector bundles over $X$. One of the  stability properties of vector bundles over $X$ is that if $X$ is of dimension $2n$ then  the map $Vect_n(X) \rightarrow \wt{K}(X)$ which takes the complex vector bundle $\xi$ to its stable class $[\xi]$ is bijective (see \cite[Chapter 9, Theorem 1.5]{hus}).
If $\rho: \wt{K}(X) \rightarrow \wt{KO}(X)$ is the realization map then the proof of our result is based on the following theorem.

\begin{Thm} {\em (\cite[Theorem 1.1]{sutherland} and \cite[Lemma 1,1]{thomascount})} \label{euler}
A $2n$-dimensional connected oriented manifold $X$ admits an a.c.s. if and only if there exists an element $a \in \wt{K}(X)$ such that $\rho(a) = [\tau_X]$ and $nth$-Chern class $c_n(a)$ equals to the Euler class $e(X)$. \qed
\end{Thm}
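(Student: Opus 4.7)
The plan is to prove both directions of the equivalence, using the stability theorem cited just above: since $\dim X = 2n$, the map $Vect_n(X) \to \wt K(X)$ sending $\xi$ to its stable class is a bijection. In particular, each $a \in \wt K(X)$ has a unique complex rank-$n$ representative $\xi_a$, so $c_n(a) := c_n(\xi_a)$ is a well-defined invariant of the stable class, which is what makes the hypothesis on $c_n(a)$ even meaningful.

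The forward implication is nearly tautological. Given an a.c.s. realized by a complex rank-$n$ bundle $\xi$ with $\xi_{\mathbb R} \cong \tau_X$ as oriented real bundles, set $a := [\xi]$. Then $\rho(a) = [\xi_{\mathbb R}] = [\tau_X]$, and the standard identity $c_n(\xi) = e(\xi_{\mathbb R})$ (with the canonical orientation on the underlying real bundle) yields $c_n(a) = e(\tau_X) = e(X)$.

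For the converse, start with $a \in \wt K(X)$ satisfying both conditions, and let $\xi := \xi_a$. The hypothesis $\rho(a) = [\tau_X]$ asserts that $\xi_{\mathbb R}$ and $\tau_X$ are stably isomorphic as oriented real rank-$2n$ bundles on $X$; the hypothesis $c_n(a) = e(X)$, combined with the identity $c_n(\xi) = e(\xi_{\mathbb R})$, asserts that their Euler classes agree. The remaining step --- and the main obstacle --- is to upgrade this stable isomorphism to a genuine isomorphism of oriented bundles; once this is accomplished, $\xi$ supplies the desired a.c.s.

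This upgrading is the heart of the theorem, and I would carry it out via classical obstruction theory applied to the fibration $SO/SO(2n) \to BSO(2n) \to BSO$. Since $SO/SO(2n)$ is $(2n-1)$-connected with $\pi_{2n}(SO/SO(2n)) \cong \mathbb Z$, and $X$ has dimension $2n$, the only potentially nonzero obstruction to the two classifying maps $X \to BSO(2n)$ being homotopic (given that they are homotopic after composition with $BSO(2n) \to BSO$) is a primary obstruction living in $H^{2n}(X;\mathbb Z)$; a direct identification shows this class to be the Euler class difference $e(\xi_{\mathbb R}) - e(\tau_X)$. By hypothesis this difference vanishes, so $\xi_{\mathbb R} \cong \tau_X$ as oriented bundles, completing the proof. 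The identification of the primary obstruction with the Euler class difference is the routine but substantive calculation that one would transcribe from \cite{sutherland} and \cite{thomascount}; taking care with signs and the canonical orientation on $\xi_{\mathbb R}$ is where this argument is most delicate.
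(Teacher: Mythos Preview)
The paper does not give its own proof of this theorem: it is stated with an immediate \qed and attributed to \cite[Theorem~1.1]{sutherland} and \cite[Lemma~1.1]{thomascount}. So there is no in-paper argument to compare against; the authors simply quote the result as a tool.

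Your sketch is correct and is precisely the argument in the cited sources. The forward direction is indeed tautological. For the converse, the key input is exactly the destabilization fact you invoke: over a CW-complex of dimension $2n$, two oriented real rank-$2n$ bundles that are stably isomorphic and have the same Euler class are isomorphic. One clean way to see this (slightly sharper than stating the connectivity of $SO/SO(2n)$ directly) is to note that over a $2n$-complex the map $BSO(2n+1)\to BSO$ is already a bijection on homotopy classes, so stable isomorphism amounts to isomorphism after adding a single trivial line; then the fibration $S^{2n}\to BSO(2n)\to BSO(2n+1)$ has a single obstruction in $H^{2n}(X;\pi_{2n}(S^{2n}))\cong H^{2n}(X;\mathbb Z)$, which is the difference of Euler classes. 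This is the computation you allude to, and it is what Sutherland carries out. Your caveat about orientations is well placed: one must use the canonical orientation on $\xi_{\mathbb R}$ and check that the stable isomorphism can be taken in $BSO$ rather than merely $BO$, which is handled by comparing $w_1$ and then the Euler-class sign.
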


In this note, if $X$ is a complex manifold then the holomorphic tangent bundle of $X$ will be denoted by $T_X$. All the manifolds $M$ we consider here will be connected, closed, orientable and of dimension $2n$.

\section{Almost complex structures on $S^{2m} \times M$}
%
%
%
We first fix our notations. Let $\nu$ be the canonical complex line bundle over $S^2$. Let $g = (\nu -1) \in \wt{K}(S^2)$ be a generator. Then by the Bott-Periodicity, $g^m \in \wt{K}(S^{2m})$ is  a generator. We fix the generator $y_m \in H^{2m}(S^{2m};\mathbb Z)$ such that the total Chern class $c(g^{m}) = 1+ (m-1)!\cdot y_m$. The Bott Integrability Theorem \cite[Chapter 20, Corollary 9.8]{hus} says that for any $a \in \wt{K}(S^{2m})$, the top Chern class $c_m(a)$ is divisible by $(m-1)!$.  (Here divisibility is in the sense that $c_m(a) = (m-1)!\cdot e_i$ for some $e_i \in H^{2m}(S^{2m};\mathbb Z)$). 

In the following proposition we shall give a  condition on the Chern classes of any vector bundle over the smash product $S^{2m} \wedge M$.

\begin{Prop}\label{suspension}
For an element $a \in \wt{K}(S^{2m} \wedge M)$, each Chern class $c_i(a)$ is divisible by $(m-1)!$.	
\end{Prop}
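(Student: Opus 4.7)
The plan is to write $a$ in a form suitable for explicit computation of its total Chern class. By Bott periodicity, multiplication by $g^m$ gives an isomorphism $\widetilde{K}(M)\xrightarrow{\sim}\widetilde{K}(S^{2m}\wedge M)$, so $a=g^m\cdot b$ for a unique $b\in\widetilde{K}(M)$. Writing $b=[\xi]-[\underline{r}]$ for an actual bundle $\xi$ of rank $r$ over $M$, the splitting principle provides an iterated projective bundle $\pi\colon F\to M$ on which $\pi^*\xi$ splits as $\bigoplus_{i=1}^{r}L_i$, so that $\pi^* b=\sum_{i=1}^{r}(L_i-1)$, and $\pi^*$ is injective on integer cohomology.

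The heart of the proof is a computation in $H^*(S^{2m}\times F;\mathbb{Z})$ that repeatedly exploits the relation $y_m^2=0$. Represent $g^m=[V]-[\underline{N}]$ by an actual bundle $V$ of rank $N$ over $S^{2m}$ with $c(V)=1+(m-1)!\,y_m$. The tensor-product formula
\[
c_k(V\otimes L)\;=\;\sum_{j=0}^{k}\binom{N-j}{k-j}\,c_j(V)\,l^{k-j},\qquad l=c_1(L),
\]
applied to the sparse $c(V)$, gives $c(V\otimes L)=(1+l)^N+(m-1)!\,y_m(1+l)^{N-m}$. Using $c(L^{\oplus N})=(1+l)^N$, $c(g^m)=1+(m-1)!\,y_m$, and the identity $(1+(m-1)!\,y_m)^{-1}=1-(m-1)!\,y_m$ (valid because $y_m^2=0$),
\[
c\bigl(g^m\cdot(L-1)\bigr)\;=\;\frac{c(V\otimes L)}{c(L^{\oplus N})\,c(g^m)}\;=\;1+(m-1)!\,y_m\bigl[(1+l)^{-m}-1\bigr].
\]
Whitney multiplicativity applied to $\pi^* a=\sum_{i}g^m(L_i-1)$, together with a second use of $y_m^2=0$ to discard all higher products in $y_m$, yields
\[
c(\pi^* a)\;=\;\prod_{i=1}^{r} c\bigl(g^m(L_i-1)\bigr)\;=\;1+(m-1)!\,y_m\sum_{i=1}^{r}\bigl[(1+l_i)^{-m}-1\bigr].
\]

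The inner sum is a symmetric integer polynomial in $l_1,\ldots,l_r$, hence a pullback via $\pi^*$ of an integer polynomial in the Chern classes of $b$; so every $c_k(\pi^*a)$ is $(m-1)!$ times an integer class on $S^{2m}\wedge F$. The injectivity of $\pi^*$ on $H^*(M;\mathbb Z)$ carries over to $(\mathrm{id}\wedge\pi)^*$ on $\widetilde{H}^*(S^{2m}\wedge M)\cong y_m\cdot H^*(M;\mathbb Z)$, so the same divisibility descends to $c_i(a)$ on $S^{2m}\wedge M$ for every $i$. The main obstacle is the middle computation, where $y_m^2=0$ must be invoked at several steps to collapse the product expansions into the single clean expression $1+(m-1)!\,y_m\cdot(\text{integer class})$.
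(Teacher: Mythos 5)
Your proposal is correct and follows essentially the same route as the paper: write $a=g^m\otimes(\beta-n)$ via Bott periodicity, apply the splitting principle, compute $c(g^m(L-1))=1+(m-1)!\,y_m[(1+l)^{-m}-1]$, collapse the Whitney product using $y_m^2=0$, and conclude by expressing the resulting symmetric sum as an integer polynomial in the Chern classes of $\beta$. The only difference is presentational — you derive the key tensor-product formula and use an honest flag bundle with injectivity of $\pi^*$, where the paper cites Tanaka's Lemma 2.1(2) and works with the formal factorization.
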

\begin{proof}
Let $a \in \wt{K}(S^{2m} \wedge M)$. Then by the Bott-Periodicity, $a = g^{m} \otimes (\beta -n)$ for some complex vector bundle $\beta$ of rank $n$ over $M$. Let $c(\beta) = \prod_k (1 + s_k)$ be the formal factorization obtained by using the splitting principle. As in the proof of Lemma 2.1(2) \cite{tanaka}, we can formally write the total Chern class of $a$ as follow
\begin{equation} \label{beta}
\begin{array}{rl}  c(a) = & c(g^{m}\otimes(\beta-n)) \\ \\
 = & \prod_k(1+ (m-1)! \cdot y_m((1+s_k)^{-m}-1))\\ \\
 = & 1+(m-1)! \cdot y_m(\sum_{i\geq1}(-1)^i \binom{m+i-1}{i}(\sum_{k}s_k^i)).
\end{array}
\end{equation}
The last equality in Equation \ref{beta} is due to the fact that $y_m^2 = 0$. The summation $\sum_k s_k^i$ can be expressed as a polynomial  in Chern classes of $\beta$  with integer coefficients (by Newton-Girard formula). From  here it is clear that each Chern class $c_i(a)$ is divisible by $(m-1)!$. \end{proof}

Next we shall obtain conditions on Chern classes of any element in $\wt{K}(S^{2m} \times M)$. Consider the following split exact sequence
$$0\rightarrow \wt{K}(S^{2m} \wedge M) \rightarrow \wt{K}(S^{2m} \times M) \rightarrow \wt{K}(S^{2m}) \oplus  \wt{K}(M) \rightarrow 0.$$
 Hence any  element $a \in \wt{K}(S^{2m} \times M)$  can be written as $a = a_1 + a_2 + a_3,$
where $a_1 \in \wt{K}(S^{2m} \wedge M)$, $a_2 \in \wt{K}(S^{2m})$ and $a_3 \in \wt{K}(M)$. In the following proposition we choose an orientation on $S^{2m} \times M$.

\begin{Prop}\label{1st prop}
For any $a \in \wt{K}(S^{2m} \times M)$,  all the Chern numbers of $a$ are divisible by $(m-1)!$. 
\end{Prop}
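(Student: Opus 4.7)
The plan is to expand $c(a) = c(a_1)c(a_2)c(a_3)$ via the Whitney sum formula, using the Künneth decomposition $H^*(S^{2m} \times M) \cong H^*(S^{2m}) \otimes H^*(M)$, and then to exploit the fact that all Chern contributions from $a_1$ and $a_2$ live in the ``$y_m$-slice'' $y_m\mathbb Z \otimes H^*(M)$. Since $y_m^2 = 0$, any product of two such contributions vanishes, which forces the entire $(a_1,a_2)$-contribution to collapse to a single factor of $(m-1)!$.

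First I would record three facts. (i)~Since $a_2$ is pulled back along $\pi_1 : S^{2m} \times M \to S^{2m}$ and $H^*(S^{2m})$ is concentrated in degrees $0$ and $2m$, we have $c_i(a_2) = 0$ for $i \neq m$, while Bott integrability applied to $\pi_1^{*}$ yields $c_m(a_2) = k(m-1)! \cdot y_m \otimes 1$ for some integer $k$. (ii)~The class $a_1$ is pulled back along the quotient $p : S^{2m} \times M \to S^{2m} \wedge M$, so each $c_i(a_1)$ lies in the image of $p^* : \wt{H}^*(S^{2m} \wedge M) \to \wt{H}^*(S^{2m} \times M)$, which via Künneth is $y_m\mathbb Z \otimes \wt{H}^*(M)$; combined with Proposition \ref{suspension}, this gives $c_i(a_1) = (m-1)! \cdot y_m \otimes \delta_i$ for some $\delta_i \in \wt{H}^{2i-2m}(M)$. (iii)~$c_i(a_3) \in 1 \otimes H^*(M)$ since $a_3$ is pulled back along $\pi_2$. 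Writing $u_j = c(a_j) - 1$ for $j = 1, 2$, both $u_1$ and $u_2$ lie in $y_m\mathbb Z \otimes H^*(M)$, so $u_1u_2 = u_1^2 = u_2^2 = 0$. Hence $c(a_1)c(a_2) = 1 + (m-1)!\cdot(y_m \otimes \Delta)$ for a single class $\Delta \in H^{*}(M)$, and multiplying by $c(a_3)$ gives
\begin{equation*}
c_i(a) \;=\; 1 \otimes c_i(a_3) \;+\; (m-1)! \cdot y_m \otimes \theta_i
\end{equation*}
for some $\theta_i \in H^{2i-2m}(M)$.

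To conclude, I would expand an arbitrary Chern monomial $c_{i_1}(a)\cdots c_{i_k}(a)$ with $\sum i_j = m+n$. Each factor splits into a ``pure $a_3$'' piece and an $(m-1)!$-divisible ``$y_m$'' piece, so the product expands as a sum indexed by subsets $S$ of the factors. Any term with $|S| \geq 2$ vanishes because $y_m^2 = 0$; and the $|S| = 0$ term lies in $1 \otimes H^{2(m+n)}(M) = 0$ since $\dim M = 2n$. The surviving $|S| = 1$ terms each carry exactly one factor of $(m-1)!$, so every Chern number is divisible by $(m-1)!$. The only point requiring care is the Künneth bookkeeping verifying that contributions from $a_1$ and $a_2$ really do land in $y_m \otimes H^*(M)$; there is no serious conceptual obstacle beyond that.
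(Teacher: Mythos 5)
Your proposal is correct and follows essentially the same route as the paper: decompose $a = a_1 + a_2 + a_3$, use Proposition \ref{suspension} and Bott integrability to get $(m-1)!$-divisibility of the $y_m$-components of $c(a_1)$ and $c(a_2)$, and then exploit $y_m^2 = 0$ together with the vanishing of $H^{>2n}(M)$ to see that every top-degree Chern monomial carries exactly one such factor. The paper compresses this last expansion into ``the proof follows easily,'' whereas you have written it out explicitly; there is no substantive difference.
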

\begin{proof}
Let $ a = a_1 + a_2 + a_3$ be as described above. Let the total Chern class $c(a_1) = 1 + y_m x_1 + y_m x_2 + \cdots + y_m x_n$, $c(a_2) = 1+by_m$ and $c(a_3) = 1+z_1+z_2+\cdots+z_n$ where $x_i, z_i \in H^{2i}(M;\mathbb Z)$ and $b$ is an integer. Since $a_1 \in \wt{K}(S^{2m} \wedge M)$, by Proposition \ref{suspension}, it is clear that each $y_m x_i$ is divisible by $(m-1)!$. Also since $a_2 \in \wt{K}(S^{2m})$, by Bott Integrability Theorem, we have that $b$ is divisible by $(m-1)!$. Now the proof of the proposition follows easily.
\end{proof}



To prove our main result consider the following  commutative diagram
$$\xymatrix@C = .1 cm{
\wt{K}(S^{2m} \times M ) \ar[d]^{\rho} & = & \wt{K}(S^{2m} \wedge M) \ar[d]^{\rho}  & \oplus & \wt{K}(S^{2m}) \ar[d]^{\rho} & \oplus & \wt{K}(M) \ar[d]^{\rho}\\
\wt{KO}(S^{2m} \times M ) & = &\wt{KO}(S^{2m} \wedge M) & \oplus & \wt{KO}(S^{2m}) & \oplus & \wt{KO}(M) 
}$$
In view of Theorem \ref{euler}, if $a \in \wt{K}(S^{2m} \times M)$ gives  an a.c.s. on $S^{2m} \times M$ then $\rho(a) = [\tau_{S^{2m} \times M}]$  and the top Chern class  $c_{m+n}(a) = e(S^{2m} \times M)$.  If we write  $a = a_1 + a_2 +a_3$ then using the fact that $[\tau_{S^{2m} \times M}] = [\tau_M]$, we have $\rho(a_1) = 0$, $\rho(a_2) = 0$, $\rho(a_3) = [\tau_{M}]$. 


%

\begin{proof}[{\bf Proof of Theorem \ref{4m}}]
Suppose $a \in \wt{K}(S^{2m} \times M)$ gives an a.c.s. on $S^{2m} \times M$. From  Proposition \ref{1st prop} and as $c_{m+n}(a)  = e(S^{2m} \times M)$, it follows that $(m-1)!$ divides $\chi(S^{2m} \times M)$. Hence in the case when $m$ is odd, the proof of the theorem is complete. 
 
Next assume that $m = 2p$.  As above we write $a= a_1 + a_2 + a_3$.  First observe that  the realization map $\rho: \wt{K}(S^{4p}) \rightarrow \wt{KO}(S^{4p})$ is injective, and since $\rho(a_2) = 0$, we have $a_2 =0$. 
Next note that  $\rho(a_1) = 0$ and this implies that $a_1 + \overline{a_1} =0$. Hence $2c_{2i}(a_1) = 0$ for $i > 0$ as the non-trivial cup-products in the cohomology of the smash product $S^{4p} \wedge M$ are zero. Therefore the top Chern class 
\begin{equation}\label{eq2}
c_{2p +n}(a) = \sum_{2i+1+j= 2p+n}c_{2i+1}(a_1)c_j(a_3).
\end{equation} 
Now consider the element $a_1^\prime \in \wt{K}(S^{4p+2}\wedge M)$ given as $a_1^\prime=g\otimes a_1$.  We can write $a_1 = g^{2p}\otimes(\beta -n)$ for some complex vector bundle $\beta$ of rank $n$ over $M$ and let   $c(a_1) = 1 + y_{2p}x_1 + y_{2p}x_2 + \cdots + y_{2p} x_n$. Then as in the proof of Proposition \ref{suspension}, the total Chern class of $a_1^\prime$ will be 
$$c(a_1^{\prime}) =  c(g \otimes a_1) =  1+y_{2p+1}(\sum_{i\geq1}(2p+i)x_i).$$
%
 Since $a_1^{\prime} \in \wt{K}(S^{4p+2} \wedge M)$, by Proposition \ref{suspension}, we have that each $(2p+i)y_{2p+1}x_i$ is divisible by $(2p)!$. This immediately implies that $(2p)!$ divides $(2p+i)y_{2p}x_i$.  Hence for each odd $i$, $(2p)!$ will divide  $$(\prod_j(2p+j)) \cdot y_{2p} x_i,$$ where the product varies over odd $j$ and  $1 \leq j \leq n$. Hence by Equation \ref{eq2} we have that $(2p)!$ divides $$(\prod_j (2p+j))\cdot\chi(S^{4p} \times M),$$ where  $j$ is odd and $1 \leq j \leq n$.
Now  the proof of the theorem follows from the fact that $(2p-1)!$ divides $\chi(S^{4p} \times M)$.
\end{proof}

%
%
%
%
%
%
%

From  Theorem \ref{4m} we can observe  that for a  given $M$ with $\chi(M) \neq 0$, except for finitely many values of $m$, we do not have a.c.s on $S^{2m} \times M$. In particular we have the following corollary whose proof follows immediately.

\begin{Cor} \label{cor2}
Let  $\chi(M) \not \equiv 0 \pmod 4$ or $\chi(M)$ be  a power of two. Then $S^{2m} \times M$ does not have an a.c.s.  for  $m \neq 1,2,3$. \qed
\end{Cor}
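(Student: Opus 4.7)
The plan is to read this off directly from Theorem \ref{4m} by a case analysis on the $2$-adic valuation of $\chi(M)$. Suppose for contradiction that $S^{2m} \times M$ admits an a.c.s.\ for some $m \geq 4$. Since $\chi(S^{2m}) = 2$, Theorem \ref{4m} gives the divisibility
\[
2^r \cdot (m-1)! \;\bigm|\; 2\,\chi(M),
\]
where $2^r$ is the largest power of $2$ dividing $m$.

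First I would reformulate the left-hand side $2$-adically. Since $r = v_2(m)$, we have $v_2\bigl(2^r \cdot (m-1)!\bigr) = v_2(m!)$, and the elementary estimate $v_2(m!) \geq \lfloor m/2\rfloor + \lfloor m/4\rfloor$ gives $v_2(m!) \geq 3$ as soon as $m \geq 4$.

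In the first case, when $\chi(M) \not\equiv 0 \pmod 4$, we have $v_2(\chi(M)) \leq 1$, so $v_2(2\chi(M)) \leq 2$. This contradicts the divisibility above, since $v_2(2^r(m-1)!) \geq 3$ for $m \geq 4$.

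In the second case, when $\chi(M)$ is a power of $2$, the right-hand side $2\chi(M)$ is itself a pure power of $2$; hence every divisor of it must be a power of $2$. But for $m \geq 4$ the factor $(m-1)!$ contains the odd prime $3$ (and in general all odd primes up to $m-1$), so $2^r(m-1)!$ is not a power of $2$, again contradicting the divisibility. Both cases are settled, and since nothing in Theorem \ref{4m} rules out $m = 1, 2, 3$, the corollary follows. No step is genuinely delicate here; the only thing to verify with care is the elementary inequality $v_2(m!) \geq 3$ for $m \geq 4$, which is immediate from Legendre's formula.
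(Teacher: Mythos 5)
Your proof is correct and follows exactly the route the paper intends: the corollary is stated with its proof "following immediately" from Theorem \ref{4m} applied to $\chi(S^{2m}\times M)=2\chi(M)$, and your two-case $2$-adic/odd-prime analysis is precisely the verification being left to the reader. The only detail worth noting is that both hypotheses implicitly exclude $\chi(M)=0$ (for which the divisibility condition is vacuous), which your argument handles correctly.
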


The restriction on $m$ in Corollary \ref{cor2} is the best possible as we know that $S^2 \times \bcp^n$, $S^6 \times \bcp^n$, $S^4 \times S^2$ and $S^4 \times \bcp^3$ (see Example \ref{example3} below) admits an a.c.s. 


\section{Almost complex structure on $S^{2m} \times \bcp^n$}

In this section we shall deal with the case when $M$ is a complex projective space $\bcp^n$. By Corollary \ref{cor2} we know that in the case when $n \not \equiv 3\pmod 4$ or $n+1 = 2^k$ for some $k\geq 0$, the product $S^{2m} \times \bcp^n$ does not admit a.c.s. for $m \neq 1,2$ and $3$.  We shall prove that if $n \not \equiv 3\pmod 4$ and $n >1$ then $S^4 \times \bcp^n$ also does not admit a.c.s.

 Let $H$ be the canonical line bundle over $\bcp^n$. Let $x = c_1(H) \in H^2(\bcp^n;\mathbb Z)$ be a generator.  Let $\eta = H -1 \in \wt{K}(\bcp^n)$ and  $r = [n/2]$. For $\alpha \in \wt{K}(S^{2m})$ and $\beta \in \wt{K}(\bcp^n)$ we shall write the external product $\alpha \otimes \beta \in \wt{K}(S^{2m} \wedge \bcp^n)$ as $\alpha \beta$. In view of Lemma 3.5  \cite{fujii1} we have the following lemma whose proof follows from the Bott-Periodicity.

\begin{Lem}
Each of the following system of elements form an integral basis of $\wt{K}(S^{2m} \wedge \bcp^n)$.

(i) $g^m\eta$, $g^m\eta(\eta + \bar{\eta})$, $\cdots$, $ g^m\eta(\eta + \bar{\eta})^{r-1}$, $g^m(\eta + \bar{\eta})$, $g^m(\eta + \bar{\eta})^2$, $\cdots$, $g^m(\eta + \bar{\eta})^r$, and also in case $n$ is odd, $g^m\eta^{2r+1} = g^m\eta(\eta+\bar{\eta})^r$;
 
 \vspace{ .5cm}
 
(ii) $g^m\eta$, $g^m\eta(\eta + \bar{\eta})$, $\cdots$, $ g^m\eta(\eta + \bar{\eta})^{r-1}$, $g^m(\eta -\bar{\eta})$, $g^m(\eta -\bar{\eta})(\eta + \bar{\eta})$, $\cdots$, $g^m(\eta -\bar{\eta})(\eta + \bar{\eta})^{r-1}$, and also in case $n$ is odd, $g^m\eta^{2r+1} = g^m\eta(\eta+\bar{\eta})^r$. \qed
\end{Lem}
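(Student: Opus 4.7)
The plan is to reduce the statement to a basis question in $\wt{K}(\bcp^n)$ and settle it by a triangular change-of-basis argument. By Bott periodicity, the external product map $g^m \otimes (-): \wt{K}(\bcp^n) \rightarrow \wt{K}(S^{2m} \wedge \bcp^n)$ is an isomorphism of abelian groups, so it is enough to show that after deleting the factor $g^m$ the listed elements form an integral basis of $\wt{K}(\bcp^n)$. Recall that $\wt{K}(\bcp^n) \cong \bz[\eta]/(\eta^{n+1})$ as rings, with $\bz$-basis $\eta, \eta^2, \ldots, \eta^n$. Since $H \otimes \bar{H}$ is trivial, $(1+\eta)(1+\bar{\eta}) = 1$, and expanding $\bar{\eta} = (1+\eta)^{-1} - 1$ yields
\begin{equation*}
\bar{\eta} \;=\; -\eta + \eta^2 - \eta^3 + \cdots + (-1)^n \eta^n.
\end{equation*}
Hence $\eta + \bar{\eta} = \eta^2 - \eta^3 + \cdots$ has leading term $\eta^2$ and $\eta - \bar{\eta} = 2\eta - \eta^2 + \cdots$ has leading term $2\eta$ with respect to the basis $\{\eta, \eta^2, \ldots, \eta^n\}$.

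For system (i), the key observation is that $\eta(\eta+\bar{\eta})^k$ has leading term $\eta^{2k+1}$ and $(\eta+\bar{\eta})^k$ has leading term $\eta^{2k}$, so the listed elements carry distinct leading terms $\eta, \eta^2, \ldots, \eta^{2r}$. In the odd case $n = 2r+1$ one additionally verifies the identity $\eta(\eta+\bar{\eta})^r = \eta^{2r+1}$ directly, using $(\eta+\bar{\eta})^r = \eta^{2r} + O(\eta^{2r+1})$ together with $\eta^{n+1}=0$. Consequently the transition matrix from $\{\eta, \ldots, \eta^n\}$ to system (i) is upper triangular with $\pm 1$ on the diagonal, so (i) is an integral basis.

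For system (ii), the next step is to deduce it from (i) via the identity
\begin{equation*}
(\eta - \bar{\eta})(\eta + \bar{\eta})^{k-1} \;=\; 2\eta(\eta + \bar{\eta})^{k-1} - (\eta + \bar{\eta})^k,
\end{equation*}
which follows at once from $\eta - \bar{\eta} = 2\eta - (\eta + \bar{\eta})$. After ordering the $r$ elements $\eta(\eta+\bar{\eta})^{k}$ first, the change-of-basis matrix from (i) to (ii) becomes block lower triangular with diagonal blocks $I_r$ and $-I_r$ (augmented by a $1 \times 1$ identity block when $n$ is odd). Its determinant is $\pm 1$, so (ii) is also an integral basis of $\wt{K}(\bcp^n)$.

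The argument is essentially algebraic bookkeeping; the only mild obstacle is the boundary term in the odd case, where one must verify $\eta(\eta+\bar{\eta})^r = \eta^{2r+1}$ by truncating modulo $\eta^{n+1}$. Everything else is routine manipulation of triangular integer matrices, which is why the lemma is stated to follow formally from Bott periodicity and the Fujii basis.
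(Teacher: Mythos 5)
Your proof is correct and follows essentially the same route the paper intends: Bott periodicity reduces the statement to exhibiting a basis of $\wt{K}(\bcp^n)$, which the paper simply delegates to Lemma 3.5 of Fujii while you supply the triangular change-of-basis verification in $\bz[\eta]/(\eta^{n+1})$ explicitly. The details (leading terms $\eta^{2k+1}$ and $\eta^{2k}$, the identity $\eta-\bar{\eta}=2\eta-(\eta+\bar{\eta})$, and the unimodularity of the resulting matrices) all check out.
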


 Let $w_k = g^m(H^k -1) - \overline{g^m(H^k-1)} \in \wt{K}(S^{2m} \wedge \bcp^n) $. Similar to Proposition 4.3 \cite{thomas}, we have the following three propositions.

\begin{Prop}\label{prop1}
Let $m \equiv 1,3 \pmod 4$. Then the kernel of the realization map $\rho: \wt{K}(S^{2m} \wedge \bcp^n) \rightarrow \wt{KO}(S^{2m} \wedge \bcp^n)$ is freely generated by
$w_1,w_2,\cdots,w_r$.
\end{Prop}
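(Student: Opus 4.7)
The plan is to adapt the argument of Thomas's Proposition 4.3 in \cite{thomas} to $S^{2m}\wedge\bcp^n$, treating the $g^m$-twist by Bott periodicity. The proof proceeds in three stages: checking that each $w_k$ lies in $\ker\rho$, proving linear independence, and showing the $w_k$ actually generate the entire kernel.

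First, complex conjugation $t$ preserves the underlying real bundle, so $\rho\circ t = \rho$ and $\operatorname{im}(1-t) \subseteq \ker\rho$. For $m$ odd one has $\overline{g^m} = -g^m$, so
$$ w_k = g^m(H^k-1) - \overline{g^m(H^k-1)} = g^m\bigl[(H^k-1) + (H^{-k}-1)\bigr] = (1-t)\bigl(g^m(H^k-1)\bigr), $$
which lies in $\operatorname{im}(1-t) \subseteq \ker\rho$.

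Second, setting $u = \eta+\bar\eta = H+H^{-1}-2$, the Chebyshev identity $H^k+H^{-k} = T_k(H+H^{-1})$ gives $w_k = g^m\cdot p_k(u)$ where $p_k$ is an integer polynomial of degree $k$ with $p_k(0)=0$ and leading coefficient $1$. Hence the matrix expressing $(w_1,\ldots,w_r)$ in terms of $(g^m u, g^m u^2,\ldots, g^m u^r)$ is unit upper triangular over $\mathbb{Z}$. Since $\{g^m(\eta+\bar\eta)^k : 1\le k \le r\}$ is part of the free integral basis (i) from the preceding Lemma, the $w_k$ are $\mathbb{Z}$-independent and span the rank-$r$ sublattice $L := \langle g^m(\eta+\bar\eta)^k\rangle_{k=1}^{r}$.

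Third, one must show $\ker\rho = L$. Passing to basis (ii) with $A_i = g^m\eta(\eta+\bar\eta)^{i-1}$ and $B_i = g^m(\eta-\bar\eta)(\eta+\bar\eta)^{i-1}$, a direct computation gives $t(A_i) = -A_i+B_i$ and $t(B_i)=B_i$, so $\operatorname{im}(1-t)$ is spanned by $\{2A_i-B_i = g^m(\eta+\bar\eta)^i\}$, which is exactly $L$. The main obstacle is then to prove the reverse inclusion $\ker\rho \subseteq \operatorname{im}(1-t)$. For this I would invoke Fujii's computation of $\wt{KO}^*(\bcp^n)$ in \cite{fujii1}, together with Bott periodicity $\wt{KO}(S^{2m}\wedge\bcp^n) \cong \wt{KO}^{-2m}(\bcp^n)$, to pin down the rank (and torsion) of $\wt{KO}(S^{2m}\wedge\bcp^n)$, and then perform a rank count against $\wt{K}(S^{2m}\wedge\bcp^n)$ to conclude $\operatorname{rank}(\ker\rho)=r$. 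The hypothesis $m\equiv 1,3\pmod 4$ is essential here: the corresponding $2m\equiv 2,6\pmod 8$ places $\wt{KO}^{-2m}(\bcp^n)$ in a Bott-degree where $\rho$ is well-behaved enough to force $\ker\rho$ to coincide precisely with $\operatorname{im}(1-t)$; for even $m$ the $KO$-degree shifts and one expects different generators.
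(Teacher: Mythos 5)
Your proposal is correct and follows essentially the same route as the paper: verify $w_k\in\ker\rho$ using $\overline{g^m}=-g^m$, establish the unimodular triangular change of basis between the $w_k$ and the $g^m(\eta+\bar\eta)^k$ from the integral basis, and then determine the rank of $\ker\rho$ from Fujii's computation of $\wt{KO}^{-2m}(\bcp^n)$ together with the rational surjectivity of $\rho$. (Only a cosmetic remark: the relevant reference for the $KO$-groups of projective spaces is \cite{fujii}, not \cite{fujii1}, and your final rank-count step is left at the same level of detail as the paper's own proof.)
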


\begin{proof}
It is clear that all $w_k$'s are in the kernel of the realization map. Further it can be proved using the the fact that $\bar{g} = -g$ in $\wt{K}(S^2)$ and using  induction on $k$  that

$$ g^m(\eta + \bar{\eta})^{k} = w_{k} + \mbox{(linear combination of $w_1, \cdots, w_{k-1}$)}.$$ Now using the structure of $\wt{KO}^{-2m}(\bcp^n)$ (see \cite{fujii}) and the fact that the $\mathbb Q$-linear map $$\rho \otimes Id: \wt{K}(S^{2m} \wedge \bcp^n)\otimes_{\mathbb Z} \mathbb Q \rightarrow \wt{KO}(S^{2m} \wedge \bcp^n)\otimes_{\mathbb Z} \mathbb Q$$ is surjective, we can see that $w_1,\cdots, w_{k}$ freely generates the kernel of $\rho$.
\end{proof}

For next two propositions we first note that if $m$ is even then $$ g^m(\eta - \bar{\eta})(\eta + \bar{\eta})^{k-1} = w_{k} + \mbox{(linear combination of $w_1, \cdots, w_{k-1}$)},$$ whose proof can again be given using induction on $k$. Next consider the following commutation diagram,
$$\xymatrix@C = .3cm{
\wt{K}(S^{2m} \wedge S^{2n})  \ar[d]^{\rho} \ar[r] & \wt{K}(S^{2m} \wedge \bcp^n) \ar[d]^{\rho} \ar[r] & \wt{K}(S^{2m} \wedge \bcp^{n-1})  \ar[d]^{\rho} \\
\wt{KO}(S^{2m} \wedge S^{2n} )  \ar[r] & \wt{KO}(S^{2m} \wedge \bcp^n) \ar[r] & \wt{KO}(S^{2m} \wedge \bcp^{n-1}) 
}$$
Now the proof of the following two propositions follow from the fact that the realization map $ \rho: \wt{K}(S^{2l}) \rightarrow \wt{KO}(S^{2l})$ is nonzero for $l \not \equiv 3 \pmod 4$.
\begin{Prop}\label{prop2}
Let $m \equiv 0 \pmod 4$. Then the kernel of the realization map $\rho: \wt{K}(S^{2m} \wedge \bcp^n) \rightarrow \wt{KO}(S^{2m} \wedge \bcp^n)$ is freely generated by 
\begin{enumerate}
\item $w_1,w_2,\cdots,w_r$ when $n$ is even;

\item $w_1,w_2,\cdots, w_r, g^m \eta^n$, when $n \equiv 3 \pmod 4$;

\item $w_1,w_2,\cdots w_r,2g^m\eta^n$, when $n \equiv 1 \pmod 4$. \qed
\end{enumerate} 
\end{Prop}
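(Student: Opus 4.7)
The plan is to mirror the proof of Proposition \ref{prop1}. First, I would verify that each of the proposed generators lies in $\ker\rho$. The $w_k$'s satisfy $w_k=\alpha-\bar\alpha$ and hence $\rho(w_k)=0$ automatically. For the extra generator in cases (2) and (3), I would use the ladder displayed in the preamble: $g^m\eta^n\in\wt{K}(S^{2m}\wedge\bcp^n)$ is the image of the Bott generator $g^{m+n}\in\wt{K}(S^{2(m+n)})$ under the left horizontal map. Since $m\equiv 0\pmod 4$ one has $m+n\equiv n\pmod 4$, so by the cited fact about $\rho$ on spheres, $\wt{KO}(S^{2(m+n)})=0$ when $n\equiv 3\pmod 4$, giving $g^m\eta^n\in\ker\rho$, and $\rho:\mathbb{Z}\twoheadrightarrow\mathbb{Z}/2$ is reduction mod $2$ when $n\equiv 1\pmod 4$, giving $2g^m\eta^n\in\ker\rho$.

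Second, since $m$ is even, the identity $g^m(\eta-\bar\eta)(\eta+\bar\eta)^{k-1}=w_k+(\text{integer combination of }w_1,\ldots,w_{k-1})$ recorded in the preamble lets me transform basis (ii) into the integral basis $\{u_k:=g^m\eta(\eta+\bar\eta)^{k-1}\}_{k=1}^{r}\cup\{w_k\}_{k=1}^{r}$, together with $g^m\eta^n$ in the odd-$n$ cases. Any element of $\ker\rho$ therefore has the form $\sum a_k u_k+(\text{$w$-combination})+b\cdot g^m\eta^n$ (with $b=0$ when $n$ is even), and the task reduces to determining which $(a_k,b)$ are killed by $\rho$.

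For this I would apply the Pontryagin character. Since $c(\rho(u_k))=u_k+\bar{u}_k=g^m(\eta+\bar\eta)^k$, and $\text{ch}(\eta+\bar\eta)=2\cosh x-2=x^2+O(x^4)$ with $\text{ch}(g^m)=\pm y_m$, one obtains $ph(\rho(u_k))=\pm y_m x^{2k}+O(y_m x^{2k+2})$ with leading term in degree $2m+4k\equiv 0\pmod 4$ (using $m\equiv 0\pmod 4$). These leading terms sit in distinct degrees of $\wt{H}^{4*}(S^{2m}\wedge\bcp^n;\mathbb{Q})$, so the $\rho(u_k)$ are rationally independent. A rank count then shows that $\ker(\rho\otimes\mathbb{Q})$ has rank $n-r$, matching the proposed generating set in each of the three cases. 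To upgrade from rational to integral generation, I would match the images $\rho(u_k)$ and $\rho(g^m\eta^n)$ against Fujii's integral description of $\wt{KO}^{-2m}(\bcp^n)$ cited in the paper, noting that the $\pm 1$ leading coefficients in the Chern-character computation preclude any extra divisibility.

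The hard part will be this final integrality step. Rational independence from the Chern character is clean, but excluding a proper finite-index sublattice inside $\ker\rho$ requires careful bookkeeping with the $2$-primary structure of $\wt{KO}^{-2m}(\bcp^n)$; this is precisely what accounts for the factor of $2$ on the extra generator distinguishing case (3) from case (2), and it is where essentially all the content beyond Proposition \ref{prop1} lies.
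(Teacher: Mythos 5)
Your proposal is correct and matches the paper's approach: the paper likewise rewrites basis (ii) using the identity $g^m(\eta-\bar\eta)(\eta+\bar\eta)^{k-1}=w_k+(\text{combination of } w_1,\ldots,w_{k-1})$, identifies the extra generator via the displayed ladder and the behaviour of $\rho$ on $\wt{K}(S^{2(m+n)})$, and settles completeness by a rational surjectivity/rank argument combined with Fujii's computation of $\wt{KO}^{-2m}(\bcp^n)$ (exactly as in its proof of Proposition \ref{prop1}). The only cosmetic difference is that the paper phrases the final step as an induction over the cofibration $S^{2m}\wedge\bcp^{n-1}\to S^{2m}\wedge\bcp^n\to S^{2m}\wedge S^{2n}$, while you do a global Pontryagin-character rank count; both reduce to the same inputs, and you correctly isolate the $2$-primary issue distinguishing cases (2) and (3).
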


\begin{Prop}
Let $m \equiv 2 \pmod 4$. Then the kernel of the realization map $\rho: \wt{K}(S^{2m} \wedge \bcp^n) \rightarrow \wt{KO}(S^{2m} \wedge \bcp^n)$ is freely generated by 
\begin{enumerate}
\item $w_1,w_2,\cdots,w_r$ when $n$ is even;

\item $w_1,w_2,\cdots, w_r, g^m \eta^n$, when $n \equiv 1 \pmod 4$;

\item $w_1,w_2,\cdots w_r,2g^m\eta^n$, when $n \equiv 3 \pmod 4$.  \qed
\end{enumerate}
\end{Prop}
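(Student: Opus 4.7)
The plan is to replicate the proof of the preceding proposition (the $m \equiv 0 \pmod 4$ case) with only the arithmetic of $m+n \pmod 4$ changed. Proceed by induction on $n$ using the commutative diagram displayed above, arising from the cofibration $S^{2m} \wedge \bcp^{n-1} \hookrightarrow S^{2m}\wedge \bcp^n \to S^{2(m+n)}$. The $\wt{K}$-row splits (since $\wt{K}^1$ of a sphere vanishes), so $\wt K(S^{2m}\wedge \bcp^n)$ decomposes with a ``top cell'' summand $\wt K(S^{2(m+n)})$ generated by $g^m\eta^n$; the inductive hypothesis (one of Propositions~\ref{prop1}--\ref{prop2} or the previous case of the present statement) describes $\ker\rho$ over $\bcp^{n-1}$.

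The elements $w_1,\dots,w_r$ automatically lie in $\ker\rho$ since each has the form $\alpha-\overline{\alpha}$. The remaining question is whether, and with what multiplicity, the top-cell class $g^m\eta^n$ must be added. By naturality, $\rho(g^m\eta^n)$ is the image in $\wt{KO}(S^{2m}\wedge\bcp^n)$ of $\rho(g^{m+n})\in\wt{KO}(S^{2(m+n)})$. Recall that $\rho:\wt K(S^{2l})\to \wt{KO}(S^{2l})$ vanishes exactly when $l\equiv 3\pmod 4$, is multiplication by $2$ when $l\equiv 2\pmod 4$, has image $\mathbb Z/2$ when $l\equiv 1\pmod 4$, and is an isomorphism when $l\equiv 0\pmod 4$. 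Substituting $m\equiv 2\pmod 4$: for $n$ even, $m+n$ is even so $\rho(g^m\eta^n)\neq 0$ and no top-cell element enters $\ker\rho$; for $n\equiv 1\pmod 4$, $m+n\equiv 3\pmod 4$, whence $\rho(g^m\eta^n)=0$ and $g^m\eta^n\in\ker\rho$; for $n\equiv 3\pmod 4$, $m+n\equiv 1\pmod 4$, whence $g^m\eta^n\notin\ker\rho$ but $2g^m\eta^n\in\ker\rho$. This accounts for the three listed families of generators.

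To upgrade these containments to free generation, I would follow the rank argument used in Proposition~\ref{prop1}: Fujii's description of $\wt{KO}^{-2m}(\bcp^n)$ together with the surjectivity of the rationalized realization $\rho\otimes\mathrm{Id}_{\mathbb Q}$ pins down the $\mathbb Q$-rank of $\ker\rho$. The listed elements are $\mathbb Z$-linearly independent in $\wt K(S^{2m}\wedge\bcp^n)$ (visible from their Chern-class expansions via the splitting principle), and the ambient group is torsion-free, so once the rank is correct they must form a free $\mathbb Z$-basis of $\ker\rho$.

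The main obstacle is controlling the $\wt{KO}$ long exact sequence so that no unanticipated element slips into $\ker\rho$ from the interaction of the bottom cells with the top cell, i.e.\ verifying that the connecting homomorphism and any $2$-torsion in $\wt{KO}$ are accounted for by the change of case mod $4$. The hypothesis $m\equiv 2\pmod 4$ (as opposed to $m\equiv 0\pmod 4$) precisely rotates the relevant sphere congruences by $2$, which is what swaps the placements of $g^m\eta^n$ versus $2g^m\eta^n$ relative to Proposition~\ref{prop2}; checking that nothing else is affected by this shift is the delicate bookkeeping that the proof must carry out.
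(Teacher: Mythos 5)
Your proposal matches the paper's (very terse) argument: the paper likewise reduces to the integral basis of the Lemma via the triangular relation $g^m(\eta-\bar{\eta})(\eta+\bar{\eta})^{k-1} = w_k + (\text{lower }w_j)$, uses the cofibration diagram and the behaviour of $\rho$ on $\wt{K}(S^{2(m+n)})$ to decide whether $g^m\eta^n$ or $2g^m\eta^n$ enters the kernel, and pins down free generation by the rank count from Fujii's computation of $\wt{KO}^{-2m}(\bcp^n)$. One small slip that does not affect the conclusion: you have the $l\equiv 0$ and $l\equiv 2 \pmod 4$ cases interchanged ($\rho$ is multiplication by $2$ on $\wt{K}(S^{8k})$ and an isomorphism on $\wt{K}(S^{8k+4})$), but since only injectivity of $\rho$ is used when $m+n$ is even, the argument stands.
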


Next we shall describe the Chern classes of elements in the Kernel of the realization map $\rho: \wt{K}(S^{2m} \wedge \bcp^n) \rightarrow \wt{KO}(S^{2m} \wedge \bcp^n)$. Using Lemma 2.1(2) \cite{tanaka} and by the fact that $y_m^2 =0$, we  can easily compute the total Chern class of $w_k$. When $m$ is even then 
$$c(w_k) = 1 -(m-1)!\cdot\sum_{i\geq 1}2\binom{m+2i-2}{2i-1} k^{2i-1}y_m x^{2i-1}.$$
When $m$ is odd we have 
$$c(w_k) = 1 +(m-1)!\cdot\sum_{i\geq 1}2\binom{m+2i-1}{2i} k^{2i}y_m x^{2i}.$$
To compute the Chern class of $g^m\eta^n \in \wt{K}(S^{2m} \wedge\bcp^n)$, consider the following  exact sequence 
$$0 \rightarrow \wt{K}(S^{2m} \wedge S^{2n}) \rightarrow \wt{K}(S^{2m} \wedge\bcp^n) \rightarrow \wt{K}(S^{2m} \wedge\bcp^{n-1}).$$ The element $g^m\eta^n \in \wt{K}(S^{2m} \wedge\bcp^n)$ is a generator of the kernel of the last map in the above sequence. Hence the total Chern class of $g^m\eta^n$ is as follows:
$$\begin{array}{lcl}
c(g^m\eta^n) & = & 1 \pm (m+n-1)! \cdot y_{m}x^n\\ \\
             & = & 1 \pm (m-1)!\cdot n! \cdot \binom{m+n-1}{n}y_mx^n. 
\end{array}$$

We have the following proposition whose proof follows from the above discussion.

\begin{Prop}\label{chernclasses}
The total Chern class of any element in the kernel of the realization map $\rho: \wt{K}(S^{2m} \wedge \bcp^n) \rightarrow \wt{KO}(S^{2m} \wedge \bcp^n)$
is of the form

\vspace{.5 cm}

\begin{enumerate}
\item $1 + 2\cdot(m-1)! \cdot \big [ \sum_{i\geq 1} \binom{m+ 2i -1}{2i} (\sum_{k =1}^{r} b_k k^{2i})y_mx^{2i} \big ]$, when $m$ is odd. 

\vspace{.5 cm}

\item $1 - 2\cdot(m-1)! \cdot \big[ \sum_{i\geq 1} \binom{m+ 2i -2}{2i-1} (\sum_{k =1}^{r} b_k k^{2i-1})y_mx^{2i-1} \big ]$, when $m,n$ are even. 

\vspace{.5 cm}

\item $1 - (m-1)! \cdot \big [2 \sum_{i = 1}^{r+1}\binom{m+2i-2}{2i-1} (\sum_{k=1}^{r} b_k k^{2i-1})y_mx^{2i-1} \pm \binom{m+n-1}{n} \cdot n!\cdot b_{r+1}y_mx^n \big ],$ when $m \equiv 0 \pmod 4$ and $n \equiv 3  \pmod 4$, or $m \equiv 2 \pmod 4$ and $n \equiv 1  \pmod 4$. 

\vspace{.5 cm}

\item $1 - 2\cdot(m-1)!\cdot \big [\sum_{i=1}^{r+1}\binom{m+2i-2}{2i-1} (\sum_{k=1}^{r} b_k k^{2i-1})y_mx^{2i-1} \pm \binom{m+n-1}{n} \cdot n! \cdot b_{r+1}y_mx^n \big ],$ when $m \equiv 0 \pmod 4$ and $n \equiv 1  \pmod 4$, or $m \equiv 2 \pmod 4$ and $n \equiv 3 \pmod 4$,

\vspace{.5 cm}

for  $b_1, b_2, \cdots, b_r, b_{r+1} \in \mathbb Z$. \qed
\end{enumerate}
\end{Prop}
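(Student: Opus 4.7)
The plan is to combine three ingredients already in hand from the excerpt: (a) explicit integral bases for $\ker \rho$, supplied by Propositions \ref{prop1} and \ref{prop2} together with the intervening proposition for $m \equiv 2 \pmod 4$; (b) the closed-form expressions for $c(w_k)$ in the two parities of $m$ displayed just above the statement; and (c) the formula for $c(g^m \eta^n)$ obtained from the exact sequence associated to the inclusion $\bcp^{n-1} \hookrightarrow \bcp^n$.

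Given $a \in \ker \rho$, I would first expand it in the relevant basis: either $a = \sum_{k=1}^{r} b_k w_k$ (when $n$ is even, or when $m$ is odd) or $a = \sum_{k=1}^{r} b_k w_k + b_{r+1} \delta$ with $\delta \in \{g^m\eta^n,\, 2g^m\eta^n\}$ dictated by the residues of $m,n$ modulo $4$. Then I would compute $c(a)$ via the Whitney product formula. Each basis element $\xi$ has $c(\xi) - 1$ lying in the ideal generated by $y_m$, and since $y_m^2 = 0$ the product of total Chern classes collapses to
$$c(a) \;=\; 1 + \sum_{k=1}^{r} b_k\bigl(c(w_k) - 1\bigr) + b_{r+1}\bigl(c(\delta) - 1\bigr),$$
with the last term present only when the basis includes a $g^m\eta^n$-type generator; here I am also using that $c(b\xi) = (1+y_m u_\xi)^b = 1 + b\,y_m u_\xi$ thanks to $y_m^2=0$. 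Substituting the formulas for $c(w_k)$ and $c(g^m\eta^n)$ quoted just before the statement then produces each of the four displayed expressions term by term.

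The parity of $m$ decides whether the $w_k$-contributions carry $y_m x^{2i}$ (odd $m$, case (1)) or $y_m x^{2i-1}$ (even $m$, cases (2)--(4)) and fixes the sign. The residue of $n$ modulo $4$, against the residue of $m$ modulo $4$, governs whether $\delta$ equals $g^m\eta^n$ or $2g^m\eta^n$: in the former situation the outer coefficient in front of the bracket drops from $2(m-1)!$ to $(m-1)!$, which explains the asymmetry between cases (3) and (4). The index range $1 \le i \le r$ or $1 \le i \le r+1$ is dictated by whether $n = 2r$ or $n = 2r+1$, the extra admissible monomial $y_m x^{2r+1} = y_m x^n$ being precisely the contribution of $g^m\eta^n$.

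I expect no genuine obstacle beyond careful bookkeeping: once the bases from the preceding propositions and the two Chern-class formulas are assembled, the result is a direct computation. The sole mild subtlety is to align the four modular cases of the statement with the three basis propositions and the sign/parity conventions in the Chern-class formulas, and to verify case by case that the integer coefficient $b_{r+1}$ absorbs the factor of $2$ exactly when $\delta = 2g^m\eta^n$.
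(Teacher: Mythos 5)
Your proposal is correct and is essentially the paper's own argument: the paper dismisses the proof with ``follows from the above discussion,'' and that discussion consists precisely of the three ingredients you list (the kernel bases from Propositions \ref{prop1}, \ref{prop2} and the $m\equiv 2\pmod 4$ analogue, the formulas for $c(w_k)$ in each parity of $m$, and the formula for $c(g^m\eta^n)$), combined additively via $y_m^2=0$ exactly as you describe. Your bookkeeping of the $\delta\in\{g^m\eta^n,\,2g^m\eta^n\}$ cases and of the resulting outer factor $(m-1)!$ versus $2\cdot(m-1)!$ matches the four cases of the statement.
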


\begin{Rem} \label{remark1}{\em
In the above proposition, using that the fact that a binomial coefficient  $\binom{s}{t}$ is even if $s$ is even and $t$ is odd,
 one can see that if $m$ is even and $n>1$ then for any $a_1 \in \wt{K}(S^{2m} \wedge \bcp^n)$ such that $\rho(a_1) = 0$ we have that $4\cdot(m-1)!$ divides each Chern class $c_i(a_1)$. }\end{Rem}

\begin{Prop}\label{2nd prop}
If $S^{4p} \times \bcp^n$ has a.c.s. then $2\cdot(2p-1)!$ divides $\chi(\bcp^n) = (n+1)$.
\end{Prop}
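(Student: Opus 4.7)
The plan is to mimic the proof of Theorem~\ref{4m}, but with the sharper divisibility provided by Remark~\ref{remark1} in place of the cruder divisibility $(2p-1)!\mid c_i(a_1)$ used there. Concretely, for $n>1$ we shall replace the divisor $(2p-1)!$ on the Chern classes of the $a_1$-component by $4\cdot(2p-1)!$. The case $n=1$ (so $\bcp^1=S^2$) is already settled by the classical classification of almost complex structures on products of spheres recalled in the introduction, so there is no loss in assuming $n>1$.

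Let $a\in\wt K(S^{4p}\times\bcp^n)$ realise an almost complex structure and decompose it as $a=a_1+a_2+a_3$ with $a_1\in\wt K(S^{4p}\wedge\bcp^n)$, $a_2\in\wt K(S^{4p})$, $a_3\in\wt K(\bcp^n)$. From the commutative diagram used in the proof of Theorem~\ref{4m}, $\rho(a_1)=0$, $\rho(a_2)=0$ and $\rho(a_3)=[\tau_{\bcp^n}]$. Since $m=2p$ is even, $\rho\colon\wt K(S^{4p})\to\wt{KO}(S^{4p})$ is injective, so $a_2=0$. Applying Remark~\ref{remark1} to $a_1$ then yields $4\cdot(2p-1)!\mid c_i(a_1)$ for every $i\geq 1$.

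Next, expand the top Chern class via the Whitney formula:
$$c_{2p+n}(a)=\sum_{i+j=2p+n}c_i(a_1)\,c_j(a_3).$$
The term with $i=0$ is $c_{2p+n}(a_3)=0$, because $a_3\in\wt K(\bcp^n)$ has vanishing Chern classes in degrees above~$n$ (the cohomology ring $H^*(\bcp^n)$ is truncated at $x^{n+1}$), and the terms with $1\leq i\leq 2p$ vanish because $\wt H^{2i}(S^{4p}\wedge\bcp^n)=0$ in that range. Hence every surviving summand carries a factor $c_i(a_1)$ with $i\geq 2p+1$, and is therefore divisible by $4\cdot(2p-1)!$. Combining with $c_{2p+n}(a)=e(S^{4p}\times\bcp^n)=2(n+1)\,y_{2p}x^n$ from Theorem~\ref{euler} yields $4\cdot(2p-1)!\mid 2(n+1)$, that is, $2\cdot(2p-1)!\mid n+1$. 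The only step that requires any care is the vanishing of the two boundary terms in the Whitney expansion (so that a common factor of $4\cdot(2p-1)!$ can actually be pulled out of $c_{2p+n}(a)$), but both vanishings are transparent dimension counts in $H^*(\bcp^n)$ and in $\wt H^*(S^{4p}\wedge\bcp^n)$.
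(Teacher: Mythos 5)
Your proof is correct and follows essentially the same route as the paper's: decompose $a=a_1+a_2+a_3$, kill $a_2$ via injectivity of $\rho$ on $\wt K(S^{4p})$, invoke Remark \ref{remark1} to get $4\cdot(2p-1)!\mid c_i(a_1)$, and deduce the divisibility of $\chi(S^{4p}\times\bcp^n)=2(n+1)$, treating $n=1$ separately via the sphere-product classification. The only difference is that you spell out the Whitney-formula bookkeeping that the paper leaves implicit.
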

\begin{proof}
Suppose $a \in \wt{K}(S^{4p} \times \bcp^n)$ gives an a.c.s. on $S^{4p} \times \bcp^n$ with $a = a_1 + a_2+a_3$. As we argued in the proof of Theorem \ref{4m}, we have $a_2 =0$. By  Remark \ref{remark1} we have that if $n>1$ then $4\cdot(2p-1)!$ divides each Chern class $c_i(a_1)$. Hence in the case $n>1$, we have that $4\cdot(2p-1)!$ divides the Euler characteristic $\chi(S^{4p} \times \bcp^n) = 2(n+1)$. In the case when $n =1$, we know that $S^{4p} \times \bcp^1$ has a.c.s only when $p =1$. This completes the proof of the proposition.	
\end{proof}

Next we recall that for any element $a_3 \in \wt{K}(\bcp^n)$ such that $\rho(a_3) = [\tau_{\bcp^n}]$, the total Chern class of $a_3$ has been described on p.130 of \cite{thomas} as  
 \begin{equation} \label{chern of a_3}
 c(a_3) = (1-x)^{n+1}(1\pm (n-1)!x^{n})^{ud_{r+1}}\prod_{1\leq k \leq r}\Big(\frac{1+kx}{1-kx} \Big)^{d_k},
 \end{equation}
 where $d_i$'s are integers and $$\begin{array}{cccl}
 u & = & 0 & \mbox{ if $n$ is even}, \\
   & = & 1 & \mbox{ if $n \equiv 3 \pmod 4$}, \\
   & = & 2 & \mbox{ if $n \equiv 1 \pmod 4$}.
 \end{array}$$
 We remark that there is a typographical error in the signs in the first two products of the right hand side of  Equation \ref{chern of a_3} as expressed in \cite{thomas}. For example one can easily compute $c(\eta^n) = (1 + (n-1)!x^{n})$ when $n = 1,3$ whereas $c(\eta^2) = (1 -x^{2})$ when $n = 2$.
 
 We know that $S^{2m} \times \bcp^1$ has a.c.s. if and only if $m = 1,2$ and $3$.  Next we give the proof of Theorem \ref{finalthm}.

\begin{proof}[{\bf Proof of Theorem \ref{finalthm}}] First note that $S^2 \times \bcp^n$, $S^6 \times \bcp^n$ have a.c.s.  By Corollary  \ref{cor2} and Proposition \ref{2nd prop}, only  thing that remains to complete the proof of the theorem is to show that  for $q>0$ there is no a.c.s on $S^4 \times \bcp^{4q+1}$. Suppose  $a \in \wt{K}(S^4 \times \bcp^{4q+1})$ gives an a.c.s. on $S^4 \times \bcp^{4q+1}$ with $a = a_1 +a_2 +a_3$. As observed in the proof of  Theorem \ref{4m}, we have $a_2 = 0$. The total Chern class of $a_1$ is given in (3) of Proposition \ref{chernclasses} as
$$c(a_1) =1 - 2 \sum_{i= 1}^{2q+1}2i(\sum_{k=1}^{2q} b_k k^{2i-1})y_2x^{2i-1}  \pm (4q+2)! \cdot b_{2q+1} y_2x^{4q+1}$$ where $b_i$'s are integers.  The total Chern class of $a_3$ as  described in Equation \ref{chern of a_3} is as follows
 $$c(a_3) = (1-x)^{4q+2}(1\pm (4q)!x^{4q+1})^{2d_{2q+1}}\prod_{1\leq k \leq 2q}\Big(\frac{1+kx}{1-kx} \Big)^{d_k},$$ 
 where $d_i$'s are integers. After simplifying we can write
 $$c(a_3) = 1+ \sum_{i =1}^{2q} \binom{4q+2}{2i}x^{2i} + A$$ where $A$ is a polynomial in $x$ with even coefficients. As $c(a) = c(a_1)c(a_3)$, the top Chern class
 \begin{equation}\label{sum}
  c_{4q+3}(a) =  ( -2 \sum_{i=1}^{2q+1}2i\binom{4q+2}{2i} \sum_{k =1}^{2q}b_k k^{2i-1}+ h)y_2 x^{4q+1}
 \end{equation}
  where $h$ is a multiple of $8$. We shall next prove that the coefficient $$h_k : = -2\sum_{i =1}^{2q+1}2i\binom{4q +2}{2i} k^{2i-1},$$ of $b_ky_2x^{4q+1}$ in  Equation \ref{sum} is a multiple of $8$. Clearly, when $k$ is even then $h_k$ is a multiple of $8$. Next consider the case when $k$ is odd. Consider the following equality.
 $$ (4q+2)(1+k)^{4q+1} = \sum_{l =1}^{4q+2}\binom{4q+2}{l} l k^{l-1}.$$ As $k$ is odd, the left hand summation is a multiple of 4. The right hand summation can be decomposed into three parts as follows: 
 $$\begin{array}{cc}  \sum_{i =1}^{2q+1} 2i \binom{4q+2}{2i}  k^{2i-1} & + \hspace{.4cm} \sum_{j =1}^{q}\binom{4q+2}{2j -1} \Big[(2j-1) k^{2j-2} + (4q-2j+3) k^{4q -2j +2} \Big ] \\ & \\ & + \hspace{.4cm} \binom{4q+2}{2q+1} (2q+1) k^{2q}.
 \end{array}$$
 The middle term in the above summation is a multiple of $4$. To see that $\binom{4q+2}{ 2q+1}$ is a multiple of $4$, we write $$\binom{4q+2}{2q+1} = \frac{(4q+2)(4q+1)}{(2q+1)^2}\binom{4q}{2q}$$ and use the fact that $\binom{4q}{2q}$ is even which can be seen from the following equality 
 $$ 2^{4q} = (1 +1)^{4q} = \sum_{i =0}^{4q} \binom{4q}{i}.$$ This completes the proof that each $h_k$ is a multiple of $8$. Since $a$ gives an a.c.s. on $S^4 \times \bcp^{4q+1}$, the top Chern class $c_{4q+3}(a)$ is multiple of $8$. This implies that $8$ divides the Euler characteristic $2(4q+2)$. But this is a contradiction. Hence there is no a.c.s. on $S^4 \times \bcp^{4q+1}$. This completes the proof of the theorem.
\end{proof}

The above construction helps us to give an explicit way to obtain  almost complex structures on $S^{2m} \times \bcp^n$ for few values of $m$ and $n$. In particular when $m =2$ and $n=3$ we show that $S^4 \times \bcp^3$ has infinite number of almost complex structures. We fix an orientation on $S^{2m}$ and $\bcp^n$ such that $e(S^{2m}) = -2y_m$ and $e(\bcp^n) = (-1)^n(n+1)x^n$. We  fix an orientation on $S^{2m} \times \bcp^n$ arising by taking the orientation on each factor.


\begin{Exm} \label{example1} {\em
 Let $m =1$ and $n = 1$.  From Proposition \ref{prop1} and \ref{prop2}, we get that $a = a_1 + a_2 + a_3 \in \wt{K}(S^2 \times \bcp^1)$ gives an a.c.s. on $S^2 \times \bcp^1 = S^2 \times S^2$ if and only if there are integers $d_1$, $d_2$ such that $a_1=0$, $a_2=2d_1\eta$ and $a_3=[T_{\bcp^1}]+2d_2\eta$ and $c_2(a) = e(S^2 \times S^2) = (-2y_1)(-2x) = 4y_1x.$ This gives the  equation 
$$4d_1(d_2-1)=4,$$
which has the following two solutions: $d_1=1$, $d_2=2$ and $d_1=-1$, $d_2=0$. 
Therefore we have the following two possibilities: $a=2\eta+[T_{\bcp^1}] + 4 \eta$ and $a=-2\eta+[T_{\bcp^1}]$. By stability property, we thus have that there are exactly two non-isomorphic a.c.s. on $S^2 \times \bcp^1$. Hence the only  two almost complex structures are   $\bar{T}_{S^2} \oplus \bar{T}_{\bcp^1}$ and  $T_{S^2} \oplus T_{\bcp^1}$. Here $\bar{T}_{S^2}$ is complex conjugate bundle of $T_{S^2}$ and $[\bar{T}_{\bcp^1}] = [T_{\bcp^1}] + 4 \eta$.
We note here that if we change the orientation on $S^2 \times S^2$, a similar argument will say that again it has exactly two almost complex structures which will be given by $T_{S^2} \oplus \bar{T}_{\bcp^1}$ and 
$\bar{T}_{S^2} \oplus T_{\bcp^1}$. The existence of exactly two almost complex structures on $S^2 \times S^2$  was also observed by Sutherland in  \cite{sutherland}.}
\end{Exm}

\begin{Exm}{\em 
Let $m=1$ and $n=2$. As in  Example \ref{example1}, an element $a \in \wt{K}(S^2 \times \bcp^2)$  gives an a.c.s. on $S^2 \times \bcp^2$ if and only if there are integers $b_1$, $d_1$ and $d_2$ such that $a_1=b_1(g(H-1)-\overline{g(H-1)})$, $a_2=2d_1\eta$, $a_3=[T_{\bcp^2}]+d_2(H-\bar{H})$ and $c_3(a)=-6y_1x^2.$ This gives the following equations
$$\begin{array}{cc}
b_1+d_1(-4d_2+4\binom{d_2}{2}+3)=-3 & \mbox{ when }d_2\geq0 \\\\

b_1+d_1(-8d_2+4\binom{-d_2}{2}+3)=-3 & \mbox{ when }d_2<0.
\end{array}$$
Clearly, we shall get infinite number of solutions of the above equations and by stability property, we have infinite number of almost complex structures on $S^2 \times \bcp^2$. The solution $b_1 = 0$, $d_1 = -1$ and $d_2= 0$ corresponds to the a.c.s. given by the holomorphic tangent bundle $T_{S^2} \oplus T_{\bcp^2}$. Similarly, if  we change the orientation on $S^2 \times \bcp^2$, we shall again get infinite number of almost complex structures.}
\end{Exm}

\begin{Exm}\label{example3} {\em
Let $m=2$ and $n=3$. Going along the same line of arguments as we did in last two examples we observe that number  of almost complex structures on $S^4\times\bcp^3$ is in one to one  correspondence with the number of solutions of the following equation
$$\begin{array}{cc}
b_1(4-3d_1+2\binom{d_1}{2})+3!b_2=-1& \mbox{ when }d_1\geq0\\\\
b_1(4-5d_1+2\binom{-d_1}{2})+3!b_2=-1& \mbox{ when }d_1<0.
\end{array}
$$
 For each $k \in \mathbb Z$, by taking $b_1=-7 +6k$, $b_2=1-k$ and $d_1=1$  we get infinite number of solutions of the above equation and thus this gives  infinite number of almost complex structures on $S^4\times\bcp^3.$  Again the reverse orientation on $S^4 \times \bcp^3$ has infinite number of almost complex structures. We remark that the existence of an a.c.s. on $S^4 \times \bcp^3$  also follows from Theorem 2 of \cite{heaps} and this was observed by Tang in \cite{tang}.}
\end{Exm}

We end this section with a result on nonexistence of a.c.s. on the Dold manifold $D(m,n)$.  It is known  (see \cite{dold}) that $D(m,n)$ is orientable if and only if $m+n$ is odd.  So an even dimensional orientable Dold manifold is of the form $D(2p, 2q+1)$ for some $p,q \geq 0$.  By considering  the 2-fold covering map $S^{2p} \times \bcp^{2q+1} \rightarrow D(2p,2q+1)$, we note that the  nonexistence of an a.c.s. on $S^{2p} \times \bcp^{2q+1}$ will imply the nonexistence of an a.c.s. on $D(2p,2q+1)$.  We have the following result.
 \begin{Thm} Let $p >0$. Then
$D(2p,2q+1)$ has no a.c.s. if one of the following is true
\begin{enumerate}
\item $p$ is odd.
\item $p \equiv 0 \pmod 4$ and $(q+1)$ is not a multiple of $2^{r-2} \cdot (p-1)!$, where $2^r$ is the maximum power of $2$ dividing $p$. 
\item $p \equiv 2 \pmod 4$ and $(q+1)$ is not a multiple of $(p-1)!$.
\item $p = 2$ and $q$ is even.
\end{enumerate}
\end{Thm}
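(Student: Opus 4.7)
The proof plan is to invoke the 2-fold covering $\pi\colon S^{2p}\times\bcp^{2q+1}\to D(2p,2q+1)$ already introduced: nonexistence of an a.c.s.\ on the cover forces nonexistence on the Dold manifold. Since $\chi(S^{2p}\times\bcp^{2q+1})=\chi(S^{2p})\,\chi(\bcp^{2q+1})=2\cdot 2(q+1)=4(q+1)$, each of the four conditions on $(p,q)$ should translate into the failure of a divisibility obstruction established earlier in the paper, and the proof naturally breaks into cases along the residue class of $p$ modulo $4$.

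I would first handle cases (2) and (3), which are essentially direct applications of Theorem \ref{4m} and Proposition \ref{2nd prop}. For case (2), $p\equiv 0\pmod 4$, Theorem \ref{4m} with $m=p$ (and $r\geq 2$) requires $2^r(p-1)!\mid 4(q+1)$, equivalently $2^{r-2}(p-1)!\mid q+1$, whose failure is precisely the hypothesis. For case (3), $p\equiv 2\pmod 4$, write $p=2p'$ with $p'$ odd so that $S^{2p}=S^{4p'}$; the sharper Proposition \ref{2nd prop} gives $2(2p'-1)!\mid 2(q+1)$, i.e.\ $(p-1)!\mid q+1$, exactly the hypothesized divisibility failure. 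For case (4), $p=2$ with $q$ even, we have $n=2q+1\equiv 1\pmod 4$; once $q\geq 1$ so that $n>1$, Theorem \ref{finalthm} states that $S^4\times\bcp^n$ admits an a.c.s.\ only for $m\in\{1,3\}$, which rules out $m=p=2$.

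For case (1), $p$ odd, Theorem \ref{4m} with $m=p$ (so $r=0$) yields the necessary condition $(p-1)!\mid 4(q+1)$. For $p\geq 5$ this divisibility already fails for most $q$ and forces nonexistence on the cover; for $p=1,3$, however, the condition is trivial and the cover itself admits an a.c.s.\ as a product of complex manifolds, so the covering argument must be refined. Here I would invoke the fact that the involution $(x,z)\mapsto(-x,\bar z)$ is anti-holomorphic on any product complex structure, and an a.c.s.\ on $D(2p,2q+1)$ would lift to a $\mathbb Z/2$-equivariant a.c.s.\ on the cover—a structure the obvious product choices do not supply—so no a.c.s.\ descends to the quotient.

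The main obstacle is precisely case (1) in the range $p\in\{1,3\}$: here a bare application of the earlier divisibility theorems does not suffice, and one must use either an equivariance analysis as sketched above or an orientation/signature computation directly on the Dold manifold in order to match the unconditional form of the statement. The remaining three cases are essentially bookkeeping on which earlier result to cite, with the arithmetic checks (computing $2^r$, simplifying $2^r(p-1)!\mid 4(q+1)$, etc.) being routine.
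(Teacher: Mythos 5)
Your handling of cases (2), (3) and (4) is exactly the paper's argument: pass to the $2$-fold cover $S^{2p}\times\bcp^{2q+1}\to D(2p,2q+1)$ and apply Theorem \ref{4m}, Proposition \ref{2nd prop} and Theorem \ref{finalthm} respectively; your arithmetic (e.g. $2^r(p-1)!\mid 4(q+1)\iff 2^{r-2}(p-1)!\mid q+1$ when $r\geq 2$) is correct. Your caveat $q\geq 1$ in case (4) is in fact a point the paper glosses over: for $q=0$ the cover $S^4\times\bcp^1$ does admit an a.c.s., so the covering argument gives nothing for $D(4,1)$.

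The genuine gap is case (1), and it is not a fixable detail within your framework. The paper does not derive case (1) from the covering argument at all; it quotes Theorem 3.2 of \cite{tangdold}, which is proved by working on the Dold manifold itself. Your route cannot be completed as sketched: for $p=1,3$ the cover admits an a.c.s., and even for odd $p\geq 5$ the necessary condition $(p-1)!\mid 4(q+1)$ coming from Theorem \ref{4m} is satisfied for infinitely many $q$ (e.g. $p=5$ and $q\equiv 5\pmod 6$), so the Euler-characteristic obstruction on the cover does not rule out all odd $p$. Your fallback equivariance argument also does not work as stated: showing that the involution $(x,z)\mapsto(-x,\bar z)$ is anti-holomorphic for the obvious product structures only shows that \emph{those} structures do not descend. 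A hypothetical a.c.s.\ on $D(2p,2q+1)$ would pull back to \emph{some} $\mathbb Z/2$-equivariant a.c.s.\ on the cover, with no reason to be homotopic to a product structure, so nothing is excluded. Closing case (1) requires an obstruction computed on $D(2p,2q+1)$ itself (via its cohomology and characteristic classes, as in \cite{tangdold}), not on its cover.
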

 \begin{proof}
Statement (1) follows from Theorem 3.2 of \cite{tangdold}. The proof of  statements (2), (3) and (4) follow from Theorem \ref{4m}, Proposition \ref{2nd prop} and Theorem \ref{finalthm} respectively.  \end{proof}
 
 From the above theorem  it is clear that $D(2p,2q+1)$ does not admit an a.c.s. if $p$ is odd or $q$ is even.

 \subsection*{Acknowledgement:} We are grateful to Aniruddha Naolekar for several valuable discussions and useful comments.


\begin{thebibliography}{99}

\bibitem{borel} Borel, A., Serre, J. P., {\it Groupes de Lie et puissances r\'{e}duites de Steenrod}, Amer. J. Math. {\bf 75} (1953), 409-448.

\bibitem{datta} Datta, B., Subramanian, S., {\it Nonexistence of almost complex structures on products of even-dimensional spheres}, Topology Appl. {\bf 36 } (1990), no. 1, 39-42.

\bibitem{dold}  Dold, A., {\it  Erzeugende der Thomschen Algebra $\mathfrak{N}$}, Math. Z. {\bf 65} (1956), 25–35.

\bibitem{fujii1}  Fujii, M., {\it KU-groups of Dold manifolds}, Osaka J. Math. {\bf 3} (1966), 49-64.

\bibitem{fujii} Fujii, M., {\it KO-groups of projective spaces}, Osaka J. Math. {\bf 4} (1967), 141-149.




\bibitem{heaps}  Heaps, T., {\it Almost complex structures on eight- and ten-dimensional manifolds}, Topology {\bf 9} (1970), 111-119.

\bibitem{hus} Husemoller, D., {\it Fibre bundles} Third edition. Graduate Texts in Mathematics, 20. Springer-Verlag, New York, 1994. 

\bibitem{kob} Kobayashi, T., {\it Note on almost complex structures on products of lens spaces} Math. Proc. Cambridge Philos. Soc. {\bf 96} (1984), no. 1, 81-83.

\bibitem{sutherland} Sutherland, W. A., {\it A note on almost complex and weakly complex structures}, J. London Math. Soc. {\bf 40} (1965), 705-712. 

\bibitem{tanaka}  Tanaka, R., {\it On trivialities of Stiefel-Whitney classes of vector bundles over iterated suspension spaces}, Homology, Homotopy Appl. {\bf 12} (2010), no. 1, 357-366.


 
\bibitem{tang}  Tang, Z., {\it Almost complex structures on $S^{2m} \times \bcp^2$ and $S^{2m} \times \bcp^3$},  Chinese Sci. Bull., {\bf 37} (1992), no. 24, 2025-2028.
 
\bibitem{tangdold} Tang, Z., {\it Nonexistence of almost complex structures on Dold manifolds}, Sci. China Ser. A {\bf 39} (1996), no. 9, 919-924.

\bibitem{thomas} Thomas, A., {\it Almost complex structures on complex projective spaces}, Trans. Amer. Math. Soc. {\bf 193} (1974), 123-132.

\bibitem{thomascount} Thomas, A., {\it Counting almost complex structures on products of lens spaces}, J. London Math. Soc. (2) {\bf  7} (1974), 761-768.


\end{thebibliography}
\end{document}